\theoremstyle{plain}
\newtheorem{theorem}{Theorem}[section]
\newtheorem{lemma}{Lemma}[section]
\newtheorem{prop}{Proposition}[section]
\newtheorem{cor}{Corollary}[section]
\theoremstyle{definition}
\newtheorem{defin}{Definition}[section]
\DeclareMathOperator{\Aut}{\ensuremath{Aut}}
\begin{document}

\title[On the category $\mathcal{O}$  for generalized Weyl algebras]{On the category $\mathcal{O}$  for Generalized Weyl algebras}
\author{Ruben Mamani Velasco, Akaki Tikaradze}
\email{rubenlimbert.mamanivelasco@rockets.utoledo.edu, Akaki.Tikaradze@utoledo.edu }
\address{University of Toledo, Department of Mathematics \& Statistics, 
Toledo, OH 43606, USA}

\begin{abstract}
Let $H(R, \phi, z)$ be a generalized Weyl algebra  associated with a ring
$R$, its central element $z\in Z(R)$  and an automorphism $\phi,$ such
that for some $l \geq 1$, $\phi^l(z)-z$ is nilpotent and
$(z,\phi^i(z))=R$ for all $0<i<l$. We prove that the
category $\mathcal{O}$  over $H(R, z,\phi)$ is equivalent to the category
$\mathcal{O}$ over its $l$-th twist the generalized Weyl
algebra $H(R, z,\phi^l).$ This result is significantly more general than the corresponding one
for the Weyl algebra over $\mathbb{Z}/p^n\mathbb{Z}.$ 
\end{abstract}

\maketitle

\section{Introduction}

Recall that the classical Kashiwara's theorem on algebraic $D$-modules
states that given a smooth subvariety $Y$ of a smooth algebraic variety
$X$ over an algebraically closed field $\bold{k}$ of characteristic
0, the category of (left) D-modules over $X$ supported on
$Y$ is equivalent to the category of D-modules over $Y$. As a basic
application of this result, taking $X=\mathbb{A}_{\bold{k}}$  to be the
affine line and $Y=\lbrace 0\rbrace$ the origin, we obtain a well-known
statement that the category of left modules over the first Weyl algebra
\[
A_1(\bold{k})=\bold{k}\langle x, y\rangle/(xy-yx-1)
\]
on which $x$ acts locally nilpotently (the category $\mathcal{O}$  over
$A_1(\bold{k})$ ) is equivalent to the category of $\bold{k}$-vector
spaces, the functor (in one direction) being the functor of flat sections
$$M\to M^x=\lbrace m\in M: xm=0\rbrace.$$ 
%This result was generalized to
%generalized Weyl algebras (defined below) by Lunts and
%Rosenberg in \cite{LR}.

The picture is quite a bit more complicated and the above results no
longer hold if $\bold{k}$  is a field of positive characteristic (or more
generally, $\bold{k}$ contains $\mathbb{Z}/p^n\mathbb{Z}$).
Here we briefly recall a result of Shiho \cite{S}, who proved that given
a smooth variety $X$ over a ring $\bold{k}$  containing
$\mathbb{Z}/p^n\mathbb{Z}$ (under a suitable assumption on existence of a
lift of the Frobenius), there exists an equivalence between
the category of quasi-coherent sheaves on $X$ with
$p$-integrable connection and the category of quasi-coherent sheaves on
$X$ with integrable connection; this generalizes a
fundamental result by Ogus and Vologodsky \cite{OV}. We
now spell out this result for the case of the (first) Weyl algebra,
i.e.\ when $X$ is the affine line.

First, we use the following notation/terminology. Let $\bold{k}$ be a
commutative unital ring and $h\in \bold{k}$. Then $h$-Weyl algebra over
$\bold{k}$ is denoted by $A_{(1,h)}(\bold{k})$ and is defined as
\[
\bold{k}\langle x, y\rangle/(xy-yx-h).
\]
Then the above mentioned result of Shiho in the case of the affine line
can be stated as follows.

\begin{theorem}\label{main}
Let $\bold{k}$ be a commutative unital ring containing a
nilpotent element $p$ that is a prime integer.
The category of $A_1(\bold{k})$-modules with locally nilpotent $x$ action
is equivalent to the category of $A_{(1,p)}(\bold{k})$-modules with
locally nilpotent $x$ action.
Moreover, any such $A_1(\bold{k})$-module $M$ restricted to $\bold{k}[x]$
is of the form $\bold{k}[y]_{Fr}\otimes_{\bold{k}[y]}N$, where $N$ is an
$A_{(1,p)}(\bold{k})$-module of the above type, and
$Fr:\bold{k}[y]\to\bold{k}[y]$ is the $\bold{k}$-Frobenius homomorphism
$F(y)=y^p.$
\end{theorem}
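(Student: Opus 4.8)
The plan is to realize $A_1(\bold{k})$ and $A_{(1,p)}(\bold{k})$ as a generalized Weyl algebra and its $p$-th twist, deduce the equivalence from our main theorem, and then identify the resulting functor with a Frobenius pull-back so as to get the last clause.

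\emph{Reduction to the generalized Weyl algebra statement.} Put $R=\bold{k}[h]$, let $\phi\in\Aut(R)$ be the automorphism $\phi(h)=h+1$, and set $z=h$. Writing $h:=yx$, the relations $yx=h$, $xy=h+1$, $xh=(h+1)x$, $yh=(h-1)y$ exhibit $A_1(\bold{k})$ as the generalized Weyl algebra $H(R,\phi,z)$ with $x,y$ the distinguished generators; the same computation identifies the $p$-th twist $H(R,\phi^{p},z)$, whose defining automorphism is $\phi^{p}\colon h\mapsto h+p$, with $A_{(1,p)}(\bold{k})$. Under these identifications category $\mathcal{O}$ on each side is exactly the category appearing in the statement. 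To apply our main theorem with $l=p$ one must check its two hypotheses: $\phi^{p}(z)-z=\phi^{p}(h)-h=p$ is nilpotent by assumption; and for $0<i<p$, $(z,\phi^{i}(z))=(h,h+i)=(h,i)$, which equals all of $\bold{k}[h]$ as soon as the integer $i$ is a unit in $\bold{k}$ — and it is, since $i$ is prime to $p$ (writing $iu+pv=1$ in $\Z$, the element $iu=1-pv$ is a unit, $p$ being nilpotent, hence so is $i$). Thus the main theorem gives the desired equivalence of categories.

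\emph{Making the equivalence explicit.} After restriction to $\bold{k}[y]$, an object of $\mathcal{O}$ over $A_1(\bold{k})$ is a $\bold{k}[y]$-module $M$ with a locally nilpotent connection $\nabla:=x$ (i.e.\ $\nabla(ym)=m+y\nabla(m)$), and an object of $\mathcal{O}$ over $A_{(1,p)}(\bold{k})$ is a $\bold{k}[y]$-module $N$ with a locally nilpotent $p$-connection $\nabla_N:=x$ (i.e.\ $\nabla_N(yn)=pn+y\nabla_N(n)$). I would set $\mathcal{F}(N):=\bold{k}[y]_{Fr}\otimes_{\bold{k}[y]}N$, with $\bold{k}[y]$ acting through the left factor and the connection
\[
x(f\otimes n):=f'\otimes n+y^{p-1}f\otimes\nabla_N(n),
\]
check that this is well defined over $Fr$ and that $xy-yx=1$ (using $g\,y^{p}\otimes n=g\otimes yn$), and then carry out the one genuine computation: on the subspace $1\otimes N$ one finds
\[
x^{p}(1\otimes n)=1\otimes(1+y\nabla_N)(2+y\nabla_N)\cdots(p-1+y\nabla_N)\,\nabla_N(n),
\]
and since $y\nabla_N$ preserves every $\ker\nabla_N^{m}$ while $\nabla_N$ sends $\ker\nabla_N^{m}$ into $\ker\nabla_N^{m-1}$, this operator — and likewise its analogue on each $y^{i}\otimes N$ — is locally nilpotent; hence $x$ is locally nilpotent and $\mathcal{F}$ lands in category $\mathcal{O}$. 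It then remains to check that $\mathcal{F}$ is naturally isomorphic to the equivalence obtained above (equivalently, that $\mathcal{F}$ is itself an equivalence), which yields the stated form of $M$ restricted to $\bold{k}[y]$.

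\emph{The main obstacle.} The real work is the inverse direction: from $(M,\nabla)$ with $\nabla$ locally nilpotent one must reconstruct the $p$-connection $(N,\nabla_N)$ with $\mathcal{F}(N)\cong M$, i.e.\ exhibit $M$, together with its connection, as pulled back along $Fr\colon\bold{k}[y]\to\bold{k}[y]$, $y\mapsto y^{p}$. In characteristic $0$ this is the passage to horizontal sections and in characteristic $p$ it is Cartier descent, but in the present mixed-characteristic setting the naive recipe — the Grothendieck/Taylor descent datum $\sum_{k}\tfrac{1}{k!}(y_{2}-y_{1})^{k}\nabla^{k}$ on $\bold{k}[y_{1},y_{2}]/(y_{1}^{p}-y_{2}^{p})$, or the divisions by $p$ one is tempted to perform when untwisting $\nabla^{p}$ into $\nabla_N$ — is not legitimate. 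The point is exactly the arithmetic already used: the only divisions genuinely forced are by $1,\dots,p-1$, which are units, while $p$ does no harm since it is nilpotent — which is precisely the information packaged in the comaximality hypothesis $(z,\phi^{i}(z))=R$ of the general theorem. This is the affine-line incarnation of Shiho's theorem, and it is the step that needs real care, whether one argues by hand or imports it from the generalized Weyl algebra machinery.
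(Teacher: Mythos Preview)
Your first paragraph is the entire proof and coincides with the paper's derivation: the paper simply remarks that with $R=\bold{k}[h]$, $z=h$, and $\phi$ the shift, $H(R,\phi,z)\cong A_1(\bold{k})$ while its $p$-th twist is $A_{(1,p)}(\bold{k})$, so Theorem~\ref{main} specializes to the Weyl case. Your check that $i$ is a unit in $\bold{k}$ for $0<i<p$ (via $iu=1-pv$ with $p$ nilpotent) is exactly the missing verification.

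Where you diverge from the paper is in paragraphs two and three, and this divergence is unnecessary. You treat the ``Moreover'' clause as something still to be earned and set out to build the Frobenius pull-back functor $\mathcal{F}$ by hand, then flag the inverse direction as the ``main obstacle''. But the general theorem you have just invoked already contains this: its own ``Moreover'' clause says that for any $M\in\mathcal{O}(H(R,\phi,z))$ one has $M\cong R_{\phi}[y]\otimes_{R_{\phi^{l}}[y]}N$ over $R_{\phi}[y]$, and in the paper's proof this comes for free from the Morita description $G(N)=\hat{H}e\otimes_{e\hat{H}e}N$ together with the basis $\{e,ye,\dots,y^{l-1}e\}$ of $\hat{H}e$. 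So the inverse functor you are worried about constructing \emph{is} the Morita inverse $G$, and the arithmetic you isolate (only dividing by $1,\dots,p-1$, never by $p$) is precisely what the comaximality hypothesis packages into the idempotent $e$. There is no separate obstacle to overcome once you cite the full statement of the general theorem; your explicit connection formula and the $x^{p}$ computation are interesting but superfluous here.

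One small slip: with the GWA conventions $yx=z$, $xy=\phi^{-1}(z)$, $xr=\phi^{-1}(r)x$, the relations you wrote ($xy=h+1$, $xh=(h+1)x$) force $\phi^{-1}(h)=h+1$, i.e.\ $\phi(h)=h-1$, not $\phi(h)=h+1$. This is harmless for the hypothesis check, since $\phi^{p}(h)-h=\pm p$ and $(h,h\mp i)=(h,i)$ either way.
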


Our main result is a vast generalization of this theorem to a wide class of algebras called generalized Weyl algebras. Recall their definition next.

\begin{defin}
Let $R$ be a ring, let $\phi\in \Aut(R)$ and $z\in R$ be a central
element which is not a zero-divisor. Then the corresponding generalized
Weyl algebra $H(R, \phi, z)$ is the algebra generated over $R$ by
generators $x, y$ subject to the following relations:
\[
yx=z, \quad xy=\phi^{-1}(z), \quad xr=\phi^{-1}(r)x, \quad
yr=\phi(r)y.
\]
The category of left $H(R, \phi, z)$-modules on which $x$ acts locally
nilpotently is denoted by
$\mathcal{O}(H(R, \phi, z))$ and is called the category $\mathcal{O}.$ 
\end{defin} 

Generalized Weyl algebras were introduced by Bavula \cite{B} and by
Lunts--Rosenberg \cite{LR} (under the name of hypertoric algebras). They
incorporate many different classes of noncommutative algebras, such as
quantized Weyl algebras, the enveloping algebra $U(sl_2)$,  and
noncommutative deformations of type A Kleinian
singularities \cite{H}. We note the definition of $\mathcal{O}(H(R, \phi, z))$ is
in analogy with that of the category $\mathcal{O}$ for
semi-simple Lie algebras. An analogue for the category $\mathcal{O}$ for
a large class of generalized Weyl algebras (which is a full subcategory
of our $\mathcal{O}(H(R, \phi, z))$) was studied in
\cite{KT}.

Lunts and Rosenberg \cite{LR} proved that if $(z, \phi^i(z))=R$ for
all $i>0$, then $\mathcal{O}(H(R, \phi, z))$ is equivalent to the
category of $R/(z)$-modules.

The following is the main result of the paper. In what follows, given an
$R$-module $M$ and a central element $z\in Z(R)$, we denote
by $M^{z^\infty}$ the submodule of $M$ consisting of
elements annihilated by some power of $z.$ 

\begin{theorem}\label{main}
Let $R$ be a ring and $b\in Z(R)$ be a nilpotent central element, and
$l\in\mathbb{N}.$ Let $z\in Z(R)$ and $\phi\in Aut(R)$ such that 
\[
\phi^l(z)-z\in bR, \quad (z, \phi^i(z))=Z(R), 1\leq i\leq l-1.
\]
Then the functor 
\[
F:\mathcal{O}(H(R, \phi,z))\to \mathcal{O}(H(R, \phi^l, z)),\quad
F(M)=M^{z^\infty}
\]
is an equivalence of categories.
Moreover, if $M\in \mathcal{O}(H(R, \phi,z)),$ then there exists $N\in
\mathcal{O}(H(R, \phi^l, z))$ such that $M$ viewed as a module over
$R[y]$ (a subring of $H(R, \phi, z)$) is isomorphic to
$R[y]_{Fr}\otimes_{R[y]}N$ where $Fr:R[y]\to R[y]$ is the $l$-th
``Frobenius'' homomorphism defined as follows $F(y)=y^l, F(r)=r, r\in R.$
\end{theorem}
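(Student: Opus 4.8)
My plan is to build the equivalence by analyzing the structure of a module $M \in \mathcal{O}(H(R,\phi,z))$ over the subalgebra $R[y] \subset H(R,\phi,z)$, exploiting the coprimality hypothesis $(z,\phi^i(z)) = Z(R)$ for $0 < i < l$ to decompose $M$ into $z$-torsion and $z$-divisible-type pieces, and then matching those with $H(R,\phi^l,z)$-modules.

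The first step is to understand the functor $F(M) = M^{z^\infty}$. I would check that $M^{z^\infty}$ is stable under $x$, $y$, and $R$: stability under $R$ is clear since $z$ is central, and stability under $x,y$ follows from the twisted commutation relations ($x$ and $y$ intertwine $z$ with $\phi^{\pm 1}(z)$, which differ from $z$ only by the nilpotent $b$, so a power of $z$ annihilating $m$ yields a power of $z$ annihilating $xm$ and $ym$ after absorbing nilpotent factors). On $M^{z^\infty}$ the element $\phi^l(z) - z \in bR$ acts nilpotently, but I also need to see that $M^{z^\infty}$ is naturally an $H(R,\phi^l,z)$-module — here the generators of $H(R,\phi^l,z)$ should be $X = x^l$ and $Y = y^l$ (or suitable variants), with $YX = y^l x^l$ telescoping via the relations $yx = z$, $xr = \phi^{-1}(r)x$ to a product of $\phi$-translates of $z$; modulo $b$ this collapses correctly, and the nilpotence of $b$ together with local nilpotence of $x$ lets one correct the discrepancy. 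The key computation is that $x^l$ acts locally nilpotently on $M^{z^\infty}$ (immediate, since $x$ does) and that $y^l x^l$ acts as $z$ up to the right twist.

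The heart of the argument is the ``Frobenius descent'' structure statement. Using $(z,\phi^i(z)) = Z(R)$ for $0<i<l$, I would argue that as an $R[y]$-module, $M$ decomposes according to the action of $y^l$: concretely, on the $z$-torsion part the relation $yx=z$ forces every element to be reachable from $M^x$ by applying $y$, and the coprimality ensures that the intermediate powers $y, y^2, \dots, y^{l-1}$ behave freely relative to the $z$-divisibility, so that $M \cong R[y] \otimes_{R[y], Fr} N$ where $N = M^{z^\infty}$ regarded via $y \mapsto y^l$. Equivalently, $M$ is ``induced'' from $N$ along the Frobenius $Fr: R[y] \to R[y]$, $y \mapsto y^l$. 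Concretely I would set up the candidate inverse functor $G(N) = R[y]_{Fr} \otimes_{R[y]} N$ with its natural $H(R,\phi,z)$-action (defining the $x$-action so that $x \cdot (y^i \otimes n)$ uses $yx = z$ and descends $n$ when $i$ reaches $l$, invoking coprimality to split off the lower-degree terms), and verify $F \circ G \cong \mathrm{id}$ and $G \circ F \cong \mathrm{id}$; the latter is where one reconstructs all of $M$ from $M^{z^\infty}$, and it will use that the Lunts–Rosenberg-type argument localizes the non-torsion behavior away.

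The main obstacle, I expect, is the presence of the nilpotent $b$: without it ($b=0$, i.e.\ $\phi^l(z) = z$) the equivalence should follow fairly directly from the coprimality hypothesis and a telescoping/induction argument on the $y$-degree, essentially a relative version of the Lunts–Rosenberg result. With $b$ nilpotent, $\phi^l(z)$ and $z$ only agree modulo a nilpotent ideal, so $M^{z^\infty}$ is not literally a module where $y^l x^l$ acts as $z$, and one must either filter by powers of $b$ and argue by induction on the nilpotency degree of $b$ (reducing each associated graded layer to the $b=0$ case) or set up a deformation-theoretic comparison. Getting the $x^l$-action on $G(N)$ well-defined and checking the two relations $Y X = z$, $XY = \phi^{-l}(z)$ hold on the nose (not just modulo $b$) is the delicate point, and I would handle it by a successive-approximation argument: correct the naive formula for the $x^l$-action by terms in $bR, b^2R, \dots$, terminating because $b$ is nilpotent.
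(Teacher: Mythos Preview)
Your very first step contains a genuine error: you claim $M^{z^\infty}$ is stable under $x$ and $y$ because ``$x$ and $y$ intertwine $z$ with $\phi^{\pm 1}(z)$, which differ from $z$ only by the nilpotent $b$.'' But the hypothesis is that $\phi^l(z)-z\in bR$, not $\phi(z)-z\in bR$; in fact $\phi(z)$ is \emph{coprime} to $z$ by assumption, so on $z$-torsion it behaves like a unit, not like $z$. Concretely, $z^k(xm)=x\,\phi(z)^k m$, and there is no reason for this to vanish. What is true is that $M^{z^\infty}$ is stable under $x^l$ and $y^l$ (since $\phi^{\pm l}(z)\equiv z\bmod b$), which is consistent with your later remark that the generators of $H(R,\phi^l,z)$ act through $X=x^l$, $Y=y^l$. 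A second gap: $y^lx^l=z\phi(z)\cdots\phi^{l-1}(z)=\tau$, which does \emph{not} ``collapse to $z$ modulo $b$''; the extra factors $\phi^i(z)$ are genuine, and to replace $\tau$ by $z$ you must invert them, which only makes sense after completing at $z$ (or equivalently, after observing that they act invertibly on $z$-torsion). Your proposal never sets up this passage to a completion, and without it the identification of the $Y X$-relation with $z$ is not well-posed.

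The paper's route is rather different from your direct construction. It completes $R$ at $\tau$, lifts via Hensel's lemma the idempotent in $Z(R)/(\tau)$ corresponding to the factor $Z(R)/(z)$ to an idempotent $e\in\hat R_\tau$, and then proves a Morita equivalence: $\hat H e\hat H=\hat H$, the corner ring $e\hat H e$ is identified with $H(e\hat R,\hat\phi^l,e\tau)\cong H(\hat R_z,\hat\phi^l,z)$, and $\hat H e$ is free of rank $l$ over $e\hat H e$ with basis $e,ye,\dots,y^{l-1}e$. The functor $M\mapsto eM$ is then shown to coincide with $M\mapsto M^{z^\infty}$, and the freeness of $\hat H e$ yields the Frobenius description of the inverse $G(N)=\hat H e\otimes_{e\hat H e}N$ immediately. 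This idempotent/Morita approach absorbs the nilpotent $b$ cleanly (no induction on powers of $b$ is needed) and makes the rescaling $\tau\leadsto z$ transparent, whereas your successive-approximation idea would have to reinvent these corrections by hand.
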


Taking $R=\bold{k}[h]$ and $\phi(h)=h+1, z=h$ then $H(R, \phi, z)$ can be
identified with the Weyl algebra $A_1(\bold{k}),$ while $H(R, \phi, z)^p$
is $A_{1,p}(\bold{k})$, thus we recover the above mentioned theorem for
Weyl algebras.

\section{The proof}

At first, we recall the following basic properties of generalized Weyl
algebras -- henceforth denoted GWAs in short.

\begin{prop}\label{GWA-basic properties}
Given a GWA $H(R, \phi, z)$,
\[
H(R,\phi,z)=R\oplus \bigoplus_{n\geq1}Rx^n\oplus
\bigoplus_{n\geq1}Ry^n=R\oplus \bigoplus_{n\geq n}x^nR\oplus
\bigoplus_{n\geq 1}y^nR.
\]
Let $R_{\phi}[x, x^{-1}]$ be the ring of $\phi$-twisted Laurent
polynomials (which is just usual Laurent polynomials with
the noncommutativity condition $xr=\phi(r)x, r\in R$).
Then $H(R, \phi, z)$ can be identified with a subring of $R_{\phi}[x,
x^{-1}]$ generated by $R, x, zx^{-1}$. In particular $H(R, \phi,
z)[x^{-1}]=R_{\phi}[x, x^{-1}].$
\end{prop}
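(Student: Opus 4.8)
The plan is to prove the spanning statement by a hands-on rewriting argument, and then to identify $H:=H(R,\phi,z)$ with a concrete subring of $R_\phi[x,x^{-1}]$; this single realization will give both the linear independence in the decomposition and the localization statement.

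First I would show that $H$ is spanned as a left $R$-module by $\{1\}\cup\{x^n:n\ge 1\}\cup\{y^n:n\ge 1\}$. Starting from an arbitrary product of generators, one repeatedly uses $xr=\phi^{-1}(r)x$ and $yr=\phi(r)y$ to push every element of $R$ to the far left, and uses $xy=\phi^{-1}(z)\in R$ and $yx=z\in R$ to remove every subword ``$xy$'' or ``$yx$''; what remains is a single element of $R$ times a pure power $x^n$, a pure power $y^n$, or $1$, and a routine length/normalization bookkeeping shows that the rewriting terminates. The right-module statement $H=R\oplus\bigoplus_{n\ge1}x^nR\oplus\bigoplus_{n\ge1}y^nR$ is the mirror image, pushing coefficients to the right. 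For directness I would note that all four defining relations are homogeneous for the grading $\deg x=1$, $\deg y=-1$, $\deg r=0$ ($r\in R$), so that grading descends to $H$, making it $\mathbb{Z}$-graded with $H_0=R\cdot 1$, $H_n=Rx^n$ and $H_{-n}=Ry^n$ for $n\ge1$; the decomposition $H=\bigoplus_n H_n$ is then automatic, and the only remaining issue is that each piece $Rx^n$, $Ry^n$ is a free $R$-module of rank one.

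That point, together with the embedding claim, I would extract from one homomorphism $\Phi\colon H\to R_\phi[x,x^{-1}]$ defined on generators by $r\mapsto r$ ($r\in R$), $x\mapsto x$, $y\mapsto zx^{-1}$. A direct check that these images satisfy the four relations — using that $z$ is central and that the twisting convention on $R_\phi[x,x^{-1}]$ is taken to match that of $H$; e.g.\ $\Phi(y)\Phi(x)=zx^{-1}x=z$ and $\Phi(x)\Phi(y)=x\,zx^{-1}=\phi^{-1}(z)$ — shows $\Phi$ is well defined, with image the subring generated by $R$, $x$, $zx^{-1}$. Since $\Phi$ is graded, injectivity can be checked degree by degree: it is the identity on $H_0=R$; it is $rx^n\mapsto rx^n$ on $H_n$ ($n\ge1$), hence injective; and on $H_{-n}=Ry^n$ it sends $ry^n\mapsto r(zx^{-1})^n=r\bigl(\prod_{i=0}^{n-1}\phi^{-i}(z)\bigr)x^{-n}$, which vanishes iff $r\prod_{i=0}^{n-1}\phi^{-i}(z)=0$, i.e.\ iff $r=0$, because each $\phi^{-i}(z)$ is a central non-zero-divisor and hence so is the product. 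Thus $\Phi$ is injective, every graded piece is free of rank one, and $H\cong\Phi(H)$, completing the decomposition.

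For the localization, I would note that the powers $\{x^n\}$ form an Ore set in $H$ — the identities $rx^n=x^n\phi^n(r)$ and $y^kx^{n+k}=\bigl(\prod_{i=0}^{k-1}\phi^i(z)\bigr)x^n$ let one rewrite $h\,x^N$ in the form $x^n h'$ for $h$ any of the basis monomials — and its elements are non-zero-divisors since they are already invertible in $R_\phi[x,x^{-1}]$; so $H[x^{-1}]$ exists, and, viewed inside $R_\phi[x,x^{-1}]$, the subring generated by $\Phi(H)$ and $x^{-1}$ contains $R$ and $x^{\pm1}$, hence all of $R_\phi[x,x^{-1}]$, and is contained in it, so $H[x^{-1}]=R_\phi[x,x^{-1}]$. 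I expect the only genuine obstacle to be the linear independence in the third step: this is exactly what forces one to produce the faithful ``twisted-Laurent'' realization rather than argue purely combinatorially, and the grading collapses the whole question to the single hypothesis that $z$ is not a zero-divisor.
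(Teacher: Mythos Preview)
The paper does not actually prove this proposition: it is stated as a ``recall'' of basic facts about generalized Weyl algebras (see the sentence preceding it) and no argument is supplied. Your proof is the standard one and is correct. The grading argument gives the direct sum decomposition immediately, and realizing $H$ inside the twisted Laurent ring via $y\mapsto zx^{-1}$ is exactly how one establishes freeness of the graded pieces; the non-zero-divisor hypothesis on $z$ is used precisely where you indicate.

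One small wrinkle worth cleaning up: there is a convention mismatch between the paper's definition of $H(R,\phi,z)$, where $xr=\phi^{-1}(r)x$, and its description of $R_\phi[x,x^{-1}]$, where it writes $xr=\phi(r)x$. You flag this (``the twisting convention on $R_\phi[x,x^{-1}]$ is taken to match that of $H$''), but then your displayed product $\prod_{i=0}^{n-1}\phi^{-i}(z)$ corresponds to the \emph{other} convention. With the convention matching $H$ one has $x^{-1}r=\phi(r)x^{-1}$, hence $(zx^{-1})^n=\bigl(\prod_{i=0}^{n-1}\phi^{i}(z)\bigr)x^{-n}$, consistent with the identity $y^nx^n=z\phi(z)\cdots\phi^{n-1}(z)$ used later in the paper. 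This has no effect on your injectivity argument, since either way the coefficient is a product of central non-zero-divisors; just make the sign consistent when you write it up.
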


For the remainder of this section we are adopting the assumptions and
notations from Theorem \ref{main}. We start with the
following. 

Define $\tau = y^l x^l$; then $\tau$ is central in $R$, and
moreover, $\tau = z \phi(z) \cdots \phi^{l-1}(z)$.
Denote by $e'$ the idempotent in $Z(R)/(\tau)$ corresponding to the
projection onto $Z(R)/(z)$ in the isomorphism given by the Chinese
remainder theorem 
\[
Z(R)/(\tau)\cong (Z(R)/(z)) \times (Z(R)/(\phi(z))) \times
\cdots \times (Z(R)/(\phi^{l-1}(z))).
\]

Now we are going to lift this idempotent to the following
completion of $Z(R).$ Let $\hat{Z}_{\tau}$ denote the completion of
$Z(R)$ with respect to $\tau$. Using Hensel's lemma, we
conclude that $e'$  admits a unique lift to $\hat{Z}_{\tau}$ which we
denote by $e.$ 

Denote by $\hat{R}$  the completion of $R$ with respect to  $(\tau)$  or
equivalently $(b, \tau),$ since $b$ is nilpotent. Replacing
$e$ by its image under the natural homomorphism $\hat{Z}\to \hat{R}$, we
may assume that $e$ is an element of the center of $\hat{R}$. The
idempotent $e$ plays a crucial role in the proof of our
main result. Replace also $e'$ by its image under the homomorphism
$Z(R)/(\tau)\to R/(\tau)$.  Thus, $e'$  is the unique idempotent of
$R/(\tau)$ with the property that $1-e'\in zR/(\tau).$  

It follows immediately that $\phi(\tau)=\tau\mod b.$ Hence $\phi$
descends to an automorphism of $R/(\tau, b)$, to be denoted by
$\bar{\phi}.$ It is clear that $\bar{\phi}^i(e')$ is the idempotent in
$R/(\tau, b)$  corresponding to the projection onto $R/(b, \phi^i(z)).$
So, 
$$\sum_{i=0}^{l-1}\bar{\phi}^i(e')=1,\quad
\bar{\phi}(e')\bar{\phi}^j(e')=0, i\neq j\in \lbrace 0,\dots,
l-1\rbrace.$$

Next, since $\phi$ preserves the ideal $(b, \tau)$, we get
its completion automorphism, to be denoted by $\hat{\phi}$
henceforth. Again using the uniqueness of lifts of
idempotents modulo pro-nilpotent ideals, we easily see that
$\hat{\phi}^i(e)$ are lifts of $\bar{\phi}^i(e)$ and give rise to an
orthogonal decomposition of $1$ in $\hat{R}_{\tau}$:
\[
\sum_{i=0}^{l-1}\hat{\phi}^i(e)=1,\quad
\hat{\phi}(e)\hat{\phi}^j(e)=0,\quad i\neq j\in \lbrace 0,\dots,
l-1\rbrace.
\]

The following lemma shows that any module in $\mathcal{O}(H( R, \phi,z))$
may (and will) be viewed as an object of
$\mathcal{O}(H(\hat{R},\hat{\phi},z)).$

\begin{lemma}\label{category-O}
Let $M\in \mathcal{O}(H(R, \phi,z)).$ Then element $\tau=y^lx^l\in Z(R)$
acts locally nilpotently on $M.$
\end{lemma}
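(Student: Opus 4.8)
The plan is to prove by induction on $N$ that every $m\in M$ with $x^Nm=0$ is annihilated by some power of $\tau$; since $x$ acts locally nilpotently, $M=\bigcup_N\ker x^N$, and this yields the lemma. For the base case, if $N\le l$ then $x^lm=x^{l-N}(x^Nm)=0$, hence $\tau m=y^lx^lm=0$. Before the inductive step I would record the basic commutation identities in $H(R,\phi,z)$: iterating the relations $xy=\phi^{-1}(z)$ and $xr=\phi^{-1}(r)x$ gives $x^ly^l=\prod_{i=1}^{l}\phi^{-i}(z)$, which is exactly $\phi^{-l}(\tau)$. Set $\sigma:=x^ly^l=\phi^{-l}(\tau)\in Z(R)$. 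Since $\sigma$ is central, $\tau^K=(y^lx^l)^K=y^l\sigma^{K-1}x^l$, so that for every $m\in M$ one has $\tau^Km=y^l\bigl(\sigma^{K-1}(x^lm)\bigr)$.

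The crucial observation is that $\sigma$ and $\tau$ are congruent modulo a nilpotent ideal. Applying $\phi^{-i}$ to $\phi^l(z)-z\in bR$ gives $\phi^{-i}(z)-\phi^{l-i}(z)\in\phi^{-i}(b)R$ for each $i$; multiplying these congruences over $i=1,\dots,l$ and using $\prod_{i=1}^{l}\phi^{-i}(z)=\phi^{-l}(\tau)=\sigma$ and $\prod_{i=1}^{l}\phi^{l-i}(z)=\tau$ shows $\sigma\equiv\tau$ modulo $\mathfrak n:=\sum_{i=1}^{l}\phi^{-i}(b)R$. Each $\phi^{-i}(b)$ is central and nilpotent, so each $\phi^{-i}(b)R$ is a nilpotent two-sided ideal, and hence so is the finite sum $\mathfrak n$; fix $c$ with $\mathfrak n^c=0$. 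Thus $\sigma=\tau+\epsilon$ with $\epsilon\in\mathfrak n\cap Z(R)$ (centrality of $\epsilon=\sigma-\tau$ being clear) and $\epsilon^c=0$.

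To close the induction, take $m$ with $x^Nm=0$ and $N>l$. Then $x^{N-l}(x^lm)=0$, so by the inductive hypothesis $\tau^{K_0}(x^lm)=0$ for some $K_0$. Expanding $\sigma^{K-1}=(\tau+\epsilon)^{K-1}=\sum_{i=0}^{c-1}\binom{K-1}{i}\tau^{K-1-i}\epsilon^{i}$ (the terms with $i\ge c$ vanish, and $\tau,\epsilon$ commute since both are central), each summand applied to $x^lm$ vanishes as soon as $K-1-i\ge K_0$ for all $i\le c-1$, i.e. once $K\ge K_0+c$; hence $\sigma^{K-1}(x^lm)=0$ and therefore $\tau^Km=y^l\bigl(\sigma^{K-1}(x^lm)\bigr)=0$. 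I expect the only step requiring genuine care to be the congruence $x^ly^l\equiv y^lx^l$ modulo a nilpotent ideal, everything else being formal manipulation with the GWA relations together with a binomial expansion; this is exactly where the hypothesis $\phi^l(z)-z\in bR$ with $b$ nilpotent enters, and one should also note that enlarging $b$ to the nilpotent ideal $\mathfrak n$ is harmless, since a finite sum of nilpotent central ideals is nilpotent.
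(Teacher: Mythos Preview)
Your proof is correct. The core input is the same as in the paper---namely that the $l$-block products $\prod_i\phi^{i}(z)$ agree modulo a nilpotent ideal generated by $b$ and its $\phi$-translates---but the organization differs. The paper writes $y^{lk}x^{lk}=\prod_{j}(\tau+\text{nilpotent})$ and inverts this to express $\tau^k$ as an $R$-combination of the elements $y^{li}x^{li}$, then uses that these vanish on $m$ for large $i$. You instead induct on the nilpotence order of $x$ on $m$, using the clean identity $\tau^{K}=y^{l}\sigma^{K-1}x^{l}$ (which follows from grouping $(x^ly^l)^{K-1}=\sigma^{K-1}$) together with $\sigma=\tau+\epsilon$ for a central nilpotent $\epsilon$. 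Your route is slightly more self-contained: the binomial expansion in commuting central elements makes the reduction transparent, and your explicit introduction of $\mathfrak n=\sum_{i}\phi^{-i}(b)R$ handles the fact that the paper does not assume $\phi(b)\in bR$, a point the paper's shorthand ``$\tau+ba_k$'' glosses over. The paper's version has the small advantage of giving, in one stroke, an explicit expression of $\tau^k$ in terms of the $y^{li}x^{li}$, which is conceptually suggestive; your inductive argument trades that for a cleaner bound on the required exponent, $K\ge K_0+c$.
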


\begin{proof}
At first, recall the equality that holds in any generalized Weyl algebra
\[
y^nx^n=z\phi(z)\cdots\phi^{n-1}(z).
\]
Let $m\in M.$ We need to show that $\tau^nm=0$ for some $n.$ Since
$\phi^l(z)=z \mod b$, we have that 
\[
y^{lk}x^{lk}=\prod_{i=1}^k (\tau+ba_k), \quad a_k\in R.
\]
Let $b^t=0.$ Then it easily follows that we may write $\tau^k$ as
$\sum_{i=k-t}^t b_iy^{ni}x^{ni}$  for some $b_i\in R.$ Since $y^nx^nm=0$
for all $n\gg 0$, we get that $\tau^km=0$ for $k\gg 0$ as desired.
\end{proof}

Conversely, any module in $\mathcal{O}(H(\hat{R},\hat{\phi},z))$ may also
be viewed as an object of $\mathcal{O}(H( R, \phi,z))$, by the natural
homomorphism $R\to\hat{R}_\tau$. Thus
\[
\mathcal{O}(H( R, \phi,z))=\mathcal{O}(H(\hat{R},\hat{\phi},z)).
\]

The following simple observation will be very useful throughout the proof.

\begin{lemma}\label{units-idempotents}
Let $0\leq n<l$. Then $\hat{\phi}^n(e)\hat{\phi}^i(z)\in
(\hat{\phi}^n(e)\hat{R}_{\tau})^*$  for all $0\leq i\neq n <l.$ In
particular, $ez=(e\tau)u$ where
 $u$ is a unit in $e\hat{R}.$ 
\end{lemma}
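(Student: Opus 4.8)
The plan is to reduce the unit assertion, via completeness, to a coprimality statement about $Z(R)$. The key observation is that the ideal $(b,\tau)\hat R_\tau$ lies in the Jacobson radical of $\hat R_\tau$: since $b$ is central nilpotent and $\tau\in Z(R)$ is topologically nilpotent in the $(\tau)$-adic completion, the ideal $(b,\tau)\hat R_\tau$ is topologically nilpotent, so $1+(b,\tau)\hat R_\tau\subseteq \hat R_\tau^{*}$. Because $\hat\phi^n(e)$ is a central idempotent, the same holds in the corner ring $\hat\phi^n(e)\hat R_\tau$ (with identity element $\hat\phi^n(e)$): an element $\hat\phi^n(e)a$ is a unit there if and only if its image in the quotient by $\hat\phi^n(e)(b,\tau)\hat R_\tau$ is a unit. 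Now $\hat R_\tau/(b,\tau)\hat R_\tau\cong R/(b,\tau)$ and $\hat\phi^n(e)$ reduces to $\bar\phi^n(e')$, so this quotient is $\bar\phi^n(e')\cdot R/(b,\tau)$; under the Chinese remainder isomorphism $R/(b,\tau)\cong\prod_{j=0}^{l-1}R/(b,\phi^j(z))$ it is precisely the factor $R/(b,\phi^n(z))$.

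Thus it suffices to show that for $0\le i\ne n\le l-1$ the image of $\phi^i(z)$ in $R/(b,\phi^n(z))$ is invertible. This I would deduce from the hypothesis $(z,\phi^k(z))=Z(R)$, $1\le k\le l-1$, by applying powers of $\phi$: if $i>n$, put $k=i-n\in\{1,\dots,l-1\}$ and apply $\phi^n$ to get $(\phi^n(z),\phi^i(z))=Z(R)$; if $i<n$, put $k=n-i\in\{1,\dots,l-1\}$ and apply $\phi^i$ to get $(\phi^i(z),\phi^n(z))=Z(R)$. In both cases $\phi^i(z)$ and $\phi^n(z)$ generate the unit ideal of $Z(R)$, so $\phi^i(z)$ is a unit modulo $\phi^n(z)$, hence modulo $(b,\phi^n(z))$. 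This gives the first statement.

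For the ``in particular'' clause I would specialize to $n=0$: then $e\phi^i(z)$ is a unit of $e\hat R_\tau$ for every $1\le i\le l-1$. Since $e$ is idempotent and $\tau=z\phi(z)\cdots\phi^{l-1}(z)$, in $e\hat R_\tau$ we have $e\tau=(ez)(e\phi(z))\cdots(e\phi^{l-1}(z))$; writing $v=(e\phi(z))\cdots(e\phi^{l-1}(z))\in (e\hat R_\tau)^{*}$, we obtain $ez=(e\tau)v^{-1}$, so $u=v^{-1}$ works.

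I expect the only genuinely delicate point to be the first paragraph: checking that $(b,\tau)\hat R_\tau$ is contained in the Jacobson radical and correctly identifying the quotient of the corner ring $\hat\phi^n(e)\hat R_\tau$ with the Chinese remainder factor $R/(b,\phi^n(z))$, so that ``unit in the corner ring'' may legitimately be tested after this reduction. Once that bookkeeping is in place, the coprimality step and the factorization of $e\tau$ are routine.
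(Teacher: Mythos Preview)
Your argument is correct and follows essentially the same route as the paper: both reduce the unit question to the observation that $\hat\phi^n(e)\phi^n(z)$ lies in the pro-nilpotent ideal $(b,\tau)$ (you phrase this via the Jacobson radical and the quotient $R/(b,\phi^n(z))$, the paper via the explicit B\'ezout identity $1=\alpha\phi^n(z)+\beta\phi^i(z)$ and the remark that $\hat\phi^n(e)\phi^n(z)\in(\tau,b)$), and both handle the ``in particular'' by factoring $e\tau=ez\prod_{i=1}^{l-1}e\phi^i(z)$.
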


\begin{proof}
Let $1=\alpha \phi^n(z)+\beta\phi^i(z)$  for some $\alpha, \beta\in R.$
We know that $\hat{\phi}^n(e)\hat{\phi}^n(z)\in (\tau, b)$  (since the
image of $\hat{\phi}^n(e)$ in $R/(\tau, b)=\hat{R}/(\tau, b)$ corresponds
to the projection on $R/(\phi^n(z), b).$) Hence
$\beta\hat{\phi}^i(z)\hat{\phi}^n(e)=\hat{\phi}^n(e)-\alpha\hat{\phi^n}(e)\hat{\phi}^n(z)$
and $\hat{\phi}^n(e)\hat{\phi}^n(z)$ is pro-nilpotent in $\hat{R}$  and
hence in $\hat{\phi}^n(e)\hat{R}$. This implies the desired result. Now,
since $e\tau=ez\prod_{i=1}^{l-1}e\phi^i(z)$ , it follows that
$ez=(e\tau)u$  for some $u\in (e\hat{R}_{\tau})^*.$ 
\end{proof}

The following result is crucial.

\begin{lemma}\label{Morita}
Put $\hat{H}=H(\hat{R}, \hat{\phi},z)$. Then $\hat{H}e\hat{H}=\hat{H}$ and
\[
e\hat{H}e=e\hat{R}\oplus \bigoplus_{n>0}
e\hat{R}x^{ln}\oplus\bigoplus_{n>0} e\hat{R}y^{ln}.
\]
Also, $\hat{H}e$ is a free right $e\hat{H}e$-module with a basis 
$\lbrace e, ye,\dots, y^{l-1}e\rbrace.$ 
\end{lemma}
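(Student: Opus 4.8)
The plan is to establish the three assertions in order, using the orthogonal idempotent decomposition $\sum_{i=0}^{l-1}\hat\phi^i(e)=1$ together with Lemma \ref{units-idempotents}. First, to see that $\hat H e\hat H=\hat H$, it suffices to show $1\in \hat H e\hat H$. Write $1=\sum_{i=0}^{l-1}\hat\phi^i(e)$; since $\hat\phi^i(e)=x^i e \,(\text{something})$ — more precisely, using $x^i r = \hat\phi^{-i}(r)x^i$ and $r y^i = y^i\hat\phi^i(r)$ in $\hat H$ — one has $y^i e x^i \in \hat H e \hat H$ and $y^i e x^i$ is a scalar multiple (by a central element) of $\hat\phi^i(e)$ times $y^i x^i$. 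The cleanest route: for each $i$, $e x^i = x^i \hat\phi^i(e)$, hence $y^i e x^i = y^i x^i \hat\phi^i(e) = \tau_i \hat\phi^i(e)$ where $\tau_i = y^i x^i = z\phi(z)\cdots\phi^{i-1}(z)$; by Lemma \ref{units-idempotents} the element $\tau_i\hat\phi^i(e)$ is a unit in $\hat\phi^i(e)\hat R$ (its factors $\hat\phi^j(z)\hat\phi^i(e)$ for $j\neq i$ are units there, and when $i=0$ the statement is trivial since $\tau_0=1$). Therefore $\hat\phi^i(e)\in \hat H e\hat H$ for every $i$, and summing gives $1\in\hat H e\hat H$.

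Next, for the description of $e\hat He$, start from the PBW-type decomposition in Proposition \ref{GWA-basic properties}: $\hat H = \hat R \oplus\bigoplus_{n\geq 1}\hat R x^n\oplus\bigoplus_{n\geq 1}\hat R y^n$. Multiplying on the left and right by $e$ and using $e x^n = x^n\hat\phi^n(e)$, $e y^n = y^n\hat\phi^{-n}(e)$, the $x^n$-component contributes $e\hat R x^n e = e\hat R\,\hat\phi^n(e)\,x^n$, and by the orthogonality relations $e\,\hat\phi^n(e)$ vanishes unless $n\equiv 0\pmod l$ (for $0<n<l$ it is zero; in general write $n = ql + r$ and note $e\hat\phi^n(e) = e\hat\phi^r(\hat\phi^{ql}(e))$, and $\hat\phi^{ql}(e) = e$ since $\hat\phi^l$ fixes $e$ — this last point needs the fact that $\hat\phi^l(z) = z \bmod b$ forces $\hat\phi^l$ to fix the idempotent $e$ by uniqueness of lifts, which is exactly the mechanism used just before Lemma \ref{category-O}). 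So only $n$ divisible by $l$ survive, giving $e\hat Rx^{ln}$; symmetrically for the $y$-side. This yields the claimed direct sum decomposition of $e\hat He$.

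Finally, for the basis claim: $\hat H e$ decomposes as $\hat H e = \bigoplus_{n\in\mathbb Z}(\hat H e)_n$ along the $\mathbb Z$-grading of $\hat H$ (degree of $x^a$ is $a$, of $y^a$ is $-a$), and in each degree $m$ the component is $\hat R_? \cdot x^{m}e$ or $y^{-m}\hat R e$; grouping residues mod $l$, every element of $\hat H e$ is uniquely $\sum_{j=0}^{l-1} y^j h_j$ with $h_j\in e\hat He$ — here one uses that $x^a e$ and $y^a e$ can be rewritten, via the relations and the appropriate unit from Lemma \ref{units-idempotents}, to pull out a power of $y$ between $0$ and $l-1$ on the left and leave a genuine element of $e\hat He$. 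Freeness then follows because the $l$ summands $y^j(e\hat He)$ live in distinct residue classes of the $\mathbb Z$-grading and are therefore independent, and each is free of rank one over $e\hat He$ since $e\hat He$ has no zero-divisors issues obstructing this (it embeds in a twisted Laurent ring by Proposition \ref{GWA-basic properties}).

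I expect the main obstacle to be the bookkeeping in the second and third parts: correctly tracking how $e\,\hat\phi^n(e)$ behaves for $n$ not divisible by $l$ (which forces the careful use of $\hat\phi^l(e)=e$), and verifying that the rewriting $x^a e = (\text{unit})\, y^{l-a'}\cdots$ produces coefficients genuinely in $e\hat He$ rather than merely in $\hat H e$ — this is where Lemma \ref{units-idempotents}, which makes $e\tau$ (and hence $ez$) invertible up to units after localizing at $e$, does the real work, effectively letting one invert $x^l$ and $y^l$ on the image of $e$.
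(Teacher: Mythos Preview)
Your proposal is correct and follows the same approach as the paper: compute $y^i e x^i$ and invoke Lemma~\ref{units-idempotents} for the first claim, use orthogonality of the $\hat\phi^i(e)$ against the PBW decomposition for the second, and exploit the $\mathbb Z$-grading (residues mod $l$) for the freeness claim. You are even a touch more careful than the paper in explicitly noting that $\hat\phi^l(e)=e$ is needed to handle $e\,\hat\phi^n(e)$ for $n\geq l$; there is a harmless sign slip in your part two (from $x^n e=\hat\phi^{-n}(e)x^n$ one gets $e\hat R x^n e = e\hat\phi^{-n}(e)\hat R x^n$, not $e\hat R\,\hat\phi^{n}(e)\,x^n$), but the conclusion is unaffected.
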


\begin{proof}
We have that
$$y^nex^n=\hat{\phi}^n(e)y^nx^n=\hat{\phi}^n(e)z\hat{\phi}(z)\cdots
\hat{\phi}^{n-1}(z).$$
So by Lemma \ref{units-idempotents}, $y^nex^n$  is a unit in
$\hat{\phi}^n(e)\hat{R}$ for all $0\leq  n<l.$ 
As $\hat{R}=(e,\hat{\phi}(e),\dots, \hat{\phi}^{n-1}(e))$, we conclude
that the elements $y^nex^n, 0\leq n<l$ generate
$\hat{R}_{\tau}.$ Hence $\hat{H}=\hat{H}e\hat{H}.$

For any $r\in \hat{R}_{\tau},$ we have that
$erx^ne=rx^n\hat{\phi}^n(e)e=0$ unless $l$ divides $n.$ Similarly, we
check that $eRy^ne=0$ for all $n$ unless $l$ divides $n$, giving the
desired equality for $e\hat{H}e.$

Finally, we have for $0<n<l:$
\[
y^{l-n}e(ex^{l}e)=y^{l-n}x^{l}e=x^n\hat{\phi}^n(z\hat{\phi}(z)\cdots
\hat{\phi}^{l-n-1}(z))e=x^{n}\hat{\phi}^n(z)\cdots\hat{\phi}^{l-1}(z)e.
\]
Now recall that $\hat{\phi}^i(z)e$ is a unit in $e\hat{R}$.
So, $x^{n}e$ belongs to the $e\hat{H}e$-span of $\lbrace e,
ye,\dots, y^{l-1}e\rbrace$. 
Also, since $y^{lm+n}e=y^ne(ey^{lm}e)$, it follows that all $y^ne$ belong
to this span. Hence the elements 
$\lbrace e, ye,\dots, y^{l-1}e\rbrace$ generate $\hat{H}e$ over $e\hat{H}e.$ 

It remains to show that $\lbrace e, ye,\dots, y^{l-1}e\rbrace$ are linear
independent over $e\hat{H}e.$ Indeed, since $\hat{H}e$ is a graded module
over $e\hat{H}e$ and $\lbrace e, ye,\dots, y^{l-1}e\rbrace$ are
homogeneous elements of degree $0,1,\dots, l-1$, and since
the degrees of homogeneous elements of 
$e\hat{H}e$ are multiples of $l$, it suffices to check that if $y^ied=0$ with $d\in e\hat{H}e$ homogeneous, then $d=0.$ This is immediate and we are done.
\end{proof}
Next, we study the structure of the ring $e\hat{H}e$. This is done in the next lemma, which is essentially a tautology.

\begin{lemma}
There is an isomorphism of algebras $\eta: H(e\hat{R},\hat{\phi}^l,e\tau)\to e\hat{H}e$
defined as follows:
\[
\eta|_{e\hat{R}}=Id, \quad \eta(x')=ex^l, \quad \eta(y')=ey^l.
\]
\end{lemma}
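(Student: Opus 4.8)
The plan is to exhibit $\eta$ as a homomorphism out of a GWA by checking that $ex^l$ and $ey^l$, together with the elements of $e\hat R$, satisfy exactly the defining relations of $H(e\hat R,\hat\phi^l,e\tau)$, and then to invoke the structure theorems already established. The GWA $H(e\hat R,\hat\phi^l,e\tau)$ is presented over $e\hat R$ by generators $x',y'$ with relations $y'x'=e\tau$, $x'y'=(\hat\phi^l)^{-1}(e\tau)$, $x'r'=(\hat\phi^l)^{-1}(r')x'$ and $y'r'=\hat\phi^l(r')y'$ for $r'\in e\hat R$. So the first step is purely verificational: compute inside $e\hat He$ (where $e$ is central, so $e$ commutes with everything) that $ey^l\cdot ex^l = ey^lx^l = e\tau$, that $ex^l\cdot ey^l = x^ly^l e = \hat\phi^{-l}(\tau)e = (\hat\phi^l)^{-1}(e\tau)$ using that $e$ is $\hat\phi$-stable up to the orthogonal decomposition (more precisely $\hat\phi^l(e)e = e$ since $\hat\phi^l$ fixes $e'$ modulo the pro-nilpotent ideal and Hensel uniqueness forces $\hat\phi^l(e)=e$), and that $ex^l r' = \hat\phi^{-l}(r')ex^l$, $ey^l r' = \hat\phi^l(r')ey^l$ for $r'\in e\hat R$, which are immediate from the GWA relations $x^l r = \hat\phi^{-l}(r)x^l$, $y^l r = \hat\phi^l(r)y^l$ in $\hat H$. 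This gives a well-defined algebra homomorphism $\eta$.

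The second step is to show $\eta$ is surjective: this is exactly the description of $e\hat He$ from Lemma \ref{Morita}, since $e\hat He = e\hat R \oplus \bigoplus_{n>0} e\hat R x^{ln} \oplus \bigoplus_{n>0} e\hat R y^{ln}$, and every $e\hat R x^{ln} = e\hat R (ex^l)^n = \eta(e\hat R (x')^n)$ and similarly for the $y$-part (being slightly careful that $(ex^l)^n = e x^{ln}$, which holds because $e$ is central). The third step is injectivity: by the decomposition in Proposition \ref{GWA-basic properties}, $H(e\hat R,\hat\phi^l,e\tau) = e\hat R \oplus \bigoplus_{n\geq1} e\hat R(x')^n \oplus \bigoplus_{n\geq1} e\hat R(y')^n$, and $\eta$ carries this direct-sum decomposition to the one for $e\hat He$ degree by degree; on each homogeneous component $\eta$ restricts to the identity map $e\hat R \to e\hat R$ followed by right multiplication by the unit-up-to-the-decomposition elements $x^{ln}$, $y^{ln}$, so it is a bijection on each graded piece. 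Assembling these gives that $\eta$ is an isomorphism.

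The ``main obstacle'', such as it is, is bookkeeping about the idempotent $e$: one must make sure that $e$ being central in $\hat R$ is genuinely used to commute $e$ past powers of $x$ and $y$ (it is, via $x^n e = \hat\phi^{-n}(e) x^n \neq e x^n$ in general — the point is that on the subring $e\hat He$ we only ever see $ex^l$, and $\hat\phi^l(e) = e$, so $(ex^l)^n = e x^{ln}$ really does hold, whereas $(ex)^n \neq ex^n$), and that $\hat\phi^l(e)=e$ (Hensel uniqueness) is what makes $\hat\phi^l$ restrict to an automorphism of $e\hat R$ so that the target GWA is even defined. Once that is pinned down the lemma is, as stated, essentially a tautology: it is just the observation that the Morita-type decomposition of $e\hat He$ in Lemma \ref{Morita} \emph{is} the PBW-type decomposition of a GWA, and matching the two is a direct comparison of generators and relations.
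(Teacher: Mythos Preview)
Your proposal is correct and follows essentially the same approach as the paper: verify that $ex^l,ey^l$ satisfy the GWA relations for $H(e\hat R,\hat\phi^l,e\tau)$, then use the decomposition of $e\hat He$ from Lemma~\ref{Morita} together with the PBW decomposition of the source GWA to conclude bijectivity. The paper's own proof is a one-line pointer to exactly this verification, so you have simply written out what the paper leaves implicit. One small remark: your parenthetical ``$e$ is central, so $e$ commutes with everything'' in the first paragraph is misleading as stated (indeed $xe=\hat\phi^{-1}(e)x\neq ex$ in $\hat H$), but you correctly identify and resolve this in your final paragraph via $\hat\phi^l(e)=e$, which is the genuine reason the computations go through.
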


\begin{proof}
We must verify that $ex^l, ey^l$ generate $e\hat{H}e$ over
$e\hat{R}$ and satisfy the corresponding relations of a GWA. This is more
or less straightforward from the previous result.
\end{proof}

To summarize, using a standard result from Morita theory, we have an
equivalence between the category of
$H(\hat{R}_{\tau},\hat{\phi},z)$-modules and the category of
$H(e\hat{R}_{\tau},\hat{\phi}^l, e\tau)$-modules (here we
are using the above identification of $e\hat{H}e$ with
$H(e\hat{R},\hat{\phi}^l, e\tau)$) which is given by a functor 
\[
F:H(\hat{R}_{\tau}, \hat{\phi}, z)-mod\to H(e\hat{R}_{\tau},\hat{\phi}^l,
e\tau)-mod,\quad F(M)=eM.
\]
The inverse functor is given by 
\[
G:H(e\hat{R}_{\tau},\hat{\phi}^l, e\tau)-mod\to H(\hat{R}_{\tau},
\hat{\phi}, z)-mod,\quad G(N)=He\otimes_{e\hat{H}e}N.
\]
Combining this with Lemma \ref{category-O}, we get a
functor (still denoted by $F$)
\[
F:\mathcal{O}(H(R,\phi,z))\to H(e\hat{R}_{\tau},\hat{\phi}^l, e\tau))
-mod.
\]

It follows immediately that the image of $F$ is in fact in the category
$\mathcal{O}.$ By Lemma \ref{units-idempotents} we have $e\tau=u(ez)$ for
$u\in (e\hat{R})^*.$ This implies that using rescaling we may identify
the GWAs $H(e\hat{R},\hat{\phi}^l, e\tau)\cong
H(e\hat{R},\hat{\phi}^l, ez).$

Next, we claim that $e\hat{R}_{\tau}\cong \hat{R}_z$, the completion of
$R$ with respect to $z.$ Indeed, we have a ring homomorphism $f:R\to
e\hat{R}$ given by multiplication by $e.$ Since $ze\in (e\tau)$, hence
$f(z)$  is pro-nilpotent, and we may complete this
homomorphism to define $\hat{f}:\hat{R}_{z}\to e\hat{R}$. It suffices to
show that at each level $f_n:R/(z^n)\to e\hat{R}/(e\tau^n)$ is an
isomorphism. Indeed, since $\tau^n=z^n\cdots \phi^{l-1}(z^n),$  by the
Chinese remainder theorem we have
\[
R/(\tau^n)\cong (R/(z^n)) \times \cdots \times (R/(\phi^{l-1}(z)^n)).
\]
Denote by
\[ e_n\in R/(\tau^n) \]
the idempotent corresponding to the projection on $R/(z^n).$ 
But, recall that $e$ is the lift of $e_1.$ So,
\[
e\mod \tau=e_n \mod \tau= e_1.
\]
Using the uniqueness of the lifting idempotent modulo a
nilpotent ideal, we get that $e_n$ is the image of $e$ in $R/(\tau^n).$
This immediately gives that $f_n$ is an isomorphism and we are done.

So far, using the isomorphism $f$, we have a functor  
\[
F:\mathcal{O}(H(R, \phi, z))\to \mathcal{O}(H(\hat{R}_z, \hat{\phi}^l, z)).
\]
Next, we show that $F$ actually lands in $\mathcal{O}(H(R, z, \phi^l)).$
In other words, given $M\in\mathcal{O}(H(R, z, \phi))$, then $z$ acts
locally nilpotently on $F(M).$ In fact, we have the following

\begin{lemma}
We have the equality of categories 
$\mathcal{O}(H(R, \phi^l, z))= \mathcal{O}(H(\hat{R}_z, \hat{\phi}^l, z)).$
\end{lemma}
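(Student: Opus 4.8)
The plan is to prove the two categories coincide by showing that the natural completion functor $\mathcal{O}(H(R,\phi^l,z))\to \mathcal{O}(H(\hat R_z,\hat\phi^l,z))$ is an equivalence, exactly parallel to the argument already used for $\mathcal{O}(H(R,\phi,z))=\mathcal{O}(H(\hat R,\hat\phi,z))$ in Lemma \ref{category-O} and the paragraph following it. The key point is that for the GWA $H(R,\phi^l,z)$, the relevant central element $\tau'=y'^{\,l}x'^{\,l}$ (in the notation of a GWA with automorphism $\phi^l$) equals $z\,\phi^l(z)\cdots\phi^{l(l-1)}(z)$, and since $\phi^l(z)\equiv z \bmod b$ with $b$ nilpotent, the ideals $(\tau')$ and $(z)$ have the same radical in $Z(R)$; in fact $\tau'\equiv z^l \bmod b$. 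Hence the $(z)$-adic and $(\tau')$-adic topologies on $R$ coincide, so $\hat R_z$ is literally the $(\tau')$-completion, and "$z$ acts locally nilpotently" is the same condition as "$\tau'$ acts locally nilpotently" on any module.

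First I would record the identity $y'^{\,n}x'^{\,n}=z\,\phi^l(z)\cdots\phi^{l(n-1)}(z)$ in $H(R,\phi^l,z)$, and then run the same computation as in Lemma \ref{category-O}: writing $b^t=0$ and using $\phi^l(z)\equiv z\bmod b$, one expands $z^{ln}=\prod_{i=1}^{n}(\tau'+b a_i)$ and conversely solves for $\tau'^{\,k}$ as an $R$-combination of the $y'^{\,n}x'^{\,n}$. This gives: on any $M\in\mathcal{O}(H(R,\phi^l,z))$, $z$ acts locally nilpotently iff $\tau'$ does, and the latter is automatic since $x'$ (which is $x^l$) acts locally nilpotently on such $M$ by definition — wait, more precisely, $z$ acting locally nilpotently is precisely the defining condition one must check, but the equivalence I want is: an $H(R,\phi^l,z)$-module $M$ with locally nilpotent $x'$-action has $z$ locally nilpotent iff it has $\tau'$ locally nilpotent, and since $\tau'=y'^l x'^l$ is automatically locally nilpotent when $x'$ is, the two versions of "category $\mathcal{O}$" — before and after completing at $z$ — agree on objects. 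Then, as in the discussion after Lemma \ref{category-O}, a module over $\hat R_z$ pulls back to an $R$-module via $R\to\hat R_z$, and a module in $\mathcal{O}(H(R,\phi^l,z))$ factors through $\hat R_z$ because $z$ is locally nilpotent on it; these two constructions are mutually inverse, giving the equality of categories.

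The main obstacle — really the only nontrivial point — is verifying cleanly that $(z)$ and $(\tau')$ define the same topology on $R$, i.e. that $\tau'\in zR$ (clear, since $z\mid \tau'$) and that some power of $z$ lies in $\tau' R + b^N R = \tau' R$ for $N$ large. This is where $\phi^l(z)\equiv z\bmod b$ and nilpotence of $b$ are used: modulo $b$ one has $\tau'\equiv z^l$, so $z^l\in \tau' R + bR$, and iterating (lifting along the nilpotent ideal $bR$) gives $z^{lt}\in\tau' R$ where $b^t=0$, exactly as in the proof of Lemma \ref{category-O}. I would phrase the lemma's proof as: "This is proved exactly as the equality $\mathcal{O}(H(R,\phi,z))=\mathcal{O}(H(\hat R,\hat\phi,z))$ above, replacing $\phi$ by $\phi^l$, $z$ by $z$, and $\tau=y^lx^l$ by $\tau'=y'^{\,l}x'^{\,l}=z\phi^l(z)\cdots\phi^{l(l-1)}(z)$; note $\tau'\equiv z^l\bmod b$ so the $(z)$-adic and $(\tau')$-adic completions of $R$ coincide and local nilpotence of $z$ is equivalent to local nilpotence of $\tau'$." Everything else is formal from the pullback/factorization dichotomy already established.
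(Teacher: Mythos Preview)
Your proposal is correct and is essentially the paper's argument; both show that $z$ acts locally nilpotently on any $M$ in category $\mathcal{O}$ via the relation $y'^{\,n}x'^{\,n}\equiv z^n\bmod b$ together with the nilpotence of $b$, after which the pullback/factorization along $R\to\hat R_z$ is formal. The paper is slightly more direct: since already $(\phi^l)^1(z)\equiv z\bmod b$, the natural analogue of $\tau$ for $H(R,\phi^l,z)$ is simply $y'x'=z$ itself, so one can bypass your auxiliary $\tau'=y'^{\,l}x'^{\,l}$ and the topology-comparison step entirely---though your route through $\tau'$ is valid, it is an unnecessary detour.
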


\begin{proof}
Write $\phi^l(z)=z+ba$ for some $a\in R.$ Then we have
\[
y^nx^n=\prod_{i=1}^{n-1}(z+ba_i).
\]
Now, for any $m\in M\in \mathcal{O}(H(\hat{R}_z, \hat{\phi}^l, z))$ we
have that $y^nx^nm=0$ for $n\gg 0$. Since $b$ is nilpotent, this easily
implies that $z^nm=0$ for $n\gg 0.$  So, $M$ belongs to
$\mathcal{O}(H(\hat{R}_z, \hat{\phi}^l, z)).$  
\end{proof}

By the above lemma, we now  have the functor
\[
F:\mathcal{O}(H(R, \phi, z))\to \mathcal{O}(H(R ,\phi^l, z)).
\]
To finish the proof of the theorem, it remains to check that the functor
$G:H(\hat{R_z}, \hat{\phi}^l, z)-mod\to H(\hat{R}_{\tau}, \hat{\phi},
z)-mod$  takes modules from $\mathcal{O}(H(R, \phi^l, z))$  to
$\mathcal{O}(H(R, \phi, z)).$ Since $\lbrace e, ye,\dots,
y^{l-1}e\rbrace$ is a basis of $\hat{H}e$ over $\hat{e}\hat{H}e$ we have
\[
G(M)=\hat{H}e\otimes_{e\hat{H}e}M=\bigoplus_{i=0}^{l-1}t^iM, t^iM=M.
\]

The following is a description of the action of $y, r\in
\hat{R}_{\tau}, x^l$ on $G(M).$ 
First note that  $[x^l, y]\in bH(R, \phi,z).$
Indeed, we have $\phi^{l-1}(z)=\phi^{-1}(z)+bd$ for some $d$, so
\[
yx^l=x^{l-1}\phi^{l-1}(z)=x^{l-1}(\phi^{-1}(z)+bd)=x^ly+bd', d'\in H(R,
\phi, z).
\]

Now using $x', y'$ for the standard generators in
$H(\hat{R}_z, \hat{\phi}^l,z)$ to avoid confusion, we have
\[
r\cdot t^iM=t^i(e\phi^{-i}(r)M),\quad x^l\cdot t^iM=t^i(x'M)\mod b,
y\cdot t^iM=t^{i+1}M,\quad y\cdot t^{l-1}M=y_1M.
\]
Now it is straightforward to see that if $x'$ acts locally nilpotently on
$M$, then so does $x^l$. 
%Hence functor $G$ preserves the category $\mathcal{O}.$
%
%Also,
%if $z$ acts locally nilpotently on $M$, then $\tau$ acts locally nilpotently on $G(M).$
%Indeed, since $\phi^i(\tau)=\tau\mod b$ and $e\tau$ maps to $uz, u\in \hat{R}_z$ under the isomorphism
%$e\hat{R}_{\tau}\cong \hat{R}_z$, it follows that $\tau$ acts locally nilpotently on each summand
%$t^iM$, hence $\tau$ acts locally nilpotently on $G(M). 
Hence, $G$ sends $\mathcal{O}(H(R, \phi^l, z))$ 
to $\mathcal{O}(H(R, \phi,z)).$

Let $R_\phi[y]$ denote the twisted polynomial algebra over
$R$ by $\phi$, i.e.\ the subalgebra of $H(R, \phi, z)$ generated by $R,
y$. Then from the above it follows easily that $G(M)$ as a
module over $R_{\phi}[y]$ admits the following simple description. We
have a ring homomorphism, an analogue of the Frobenius,
$Fr:R_{\phi^l}[y']\to  R_{\phi}[y]$ given by $Fr(r)=r, r\in R$ and
$Fr(y')=y^l.$
We identify $R_{\phi^l}[y']$   with the subring of $H(R, \phi^l, z)$
generated by $R, y'.$ Then it follows that for $M\in \mathcal{O}(H(R,
\phi^l,z)),$  $G(M)\in\mathcal{O}(H(R,\phi,z))$  as a module over
$R_{\phi}[y]$ is isomorphic to $R_{\phi}[y]\otimes_{R_{\phi^l}[y']}M$,
where $R_{\phi}[y]$  is viewed as a right $R_{\phi^l}[y']$  module via
$F_l.$

Finally, we show that $eM=M^{z^{\infty}}$ for any $M\in\mathcal{O}(H(R,
\phi, z)).$ Recall that for any $n$, $1-e= a_nz^n \mod \tau^n$  for some
$a_n\in \hat{R}_{\tau}.$ Then given $m\in M$, we have that $\tau^nm=0$
for some $n$. If $z^nm=0$ then $1-em=0$ and $m\in eM.$ So,
$M^{z^{\infty}}\subset eM.$ On the other hand, given $m\in eM,$ recall
that $ez=ue\tau$ for $u\in \hat{R}_{\tau}.$  Then
$z^nm=(ez)^nm=u^n\tau^nm=0$ for $n\gg 0$ and we are done. This completes
the proof of the theorem. \qed

\section{Applications}

In this section we apply our main result to important families of
generalized Weyl algebras, such as noncommutative deformations of type A
Kleinian singularities (also known as classical generalized Weyl
algebras,) and quantized Weyl algebras.

Let us recall the definitions.

\begin{defin}
Let $\bold{k}$ be a commutative ring and $v\in \bold{k}[h]$ be a nonzero
polynomial. The corresponding algebra $A(v)$ (classical
GWA-noncommutative deformation of type A Kleinian singularity) is defined
as the GWA $H(\bold{k}[h], \phi, v)$ where
$\phi:\bold{k}[h]\to \bold{k}[h]$ is the automorphism given by
translation by 1, so $\phi(f(h))=f(h+1).$ So $yx=v$ and $xy=v(h-1)$. If
we take $v=h$, then we recover the Weyl algebra $A_1(\bold{k}).$ Denote
by $A(v, p)$ the generalized Weyl algebra as above with $\phi$ replaced
by $\phi^p$ (translation by $p$).
\end{defin}

\begin{defin}
Let $\bold{k}$  be a unital commutative ring, $u\in\bold{k}^*,
v\in\bold{k}.$ Then the corresponding quantized Weyl algebra
$A_{u,v}(\bold{k})$  is defined as 
\[
\bold{k}\langle x, y\rangle/(xy-uyx-v).
\]
It follows immediately that $A_{u,v}(\bold{k})$  is a generalized Weyl
algebra over $\bold{k}[h]$ with the automorphism $\phi^{-1}(h)=uh+v$ and
the central element $z=h.$
\end{defin}

Next, we recall the definition of the $q$-integers:
$[n]_q=\frac{1-q^n}{1-q}.$ 

\begin{cor}
Let $\bold{k}$  be a subfield of a commutative ring $S.$ 
Let $u\in\bold{k}$ be an $l$-th primitive root of unity and $b\in S$ a
nilpotent element, $q=u+b.$ Then the category $\mathcal{O}(A_{q,1}(S))$
is equivalent to the category $\mathcal{O}(A_{q^l, [l]_q}(S)).$ 
\end{cor}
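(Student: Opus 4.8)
The plan is to realize the quantized Weyl algebra $A_{q,1}(S)$ as a generalized Weyl algebra over $R = S[h]$ and then invoke the main theorem. Concretely, by the second definition, $A_{q,1}(S) = H(S[h], \psi, z)$ where $z = h$ and the automorphism is determined by $\psi^{-1}(h) = qh + 1$, i.e. $\psi(h) = q^{-1}(h-1) = q^{-1}h - q^{-1}$. Since $q = u + b$ with $u$ an $l$-th primitive root of unity in the subfield $\bold{k}$ and $b$ nilpotent, $q$ is a unit in $S$ (its image in $S/\mathrm{nil}$ is $u \neq 0$), so this automorphism is well-defined. I would first compute $\psi^l(h)$ explicitly: iterating $\psi(h) = q^{-1}h - q^{-1}$ gives $\psi^l(h) = q^{-l}h - (q^{-1} + q^{-2} + \cdots + q^{-l})$. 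Using $u^l = 1$ one checks $q^{-l} = (u+b)^{-l} \equiv 1 \bmod b$, so $q^{-l} = 1 + bc$ for some nilpotent-adjacent $c \in S$, and hence $\psi^l(h) - h \in bS[h] = b R$. This verifies the first hypothesis of Theorem \ref{main} with the nilpotent central element $b$.

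Next I would verify the coprimality condition $(z, \psi^i(z)) = Z(R) = S[h]$ for $1 \le i \le l-1$. Here $\psi^i(h) = q^{-i}h + c_i$ where $c_i = -(q^{-1} + \cdots + q^{-i})$; modulo $b$ this is $u^{-i}h + \gamma_i$ with $\gamma_i = -(u^{-1} + \cdots + u^{-i}) = -u^{-1}\frac{1 - u^{-i}}{1 - u^{-1}}$, which is a nonzero constant in $\bold{k}$ precisely because $u^{-i} \neq 1$ for $0 < i < l$ (as $u$ is a primitive $l$-th root of unity). Thus $\psi^i(h)$ has invertible constant term modulo $b$, and since the ideal $(h, \psi^i(h))$ contains $h$ and $\psi^i(h) - q^{-i}h = c_i$ which is a unit modulo the nilpotent $b$ (hence a unit in $S$), we get $(h, \psi^i(h)) = S[h]$. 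So both hypotheses of Theorem \ref{main} hold, and the theorem gives an equivalence $\mathcal{O}(H(S[h], \psi, h)) \simeq \mathcal{O}(H(S[h], \psi^l, h))$.

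It remains to identify $H(S[h], \psi^l, h)$ with a quantized Weyl algebra. The $l$-th twist is a GWA over $S[h]$ with automorphism $\psi^l$ and central element $z = h$, but the defining relation $yx = h$ is not in the standard quantized Weyl form unless we rescale. I would change the central element: set $z' = \lambda h$ for a suitable unit $\lambda \in S$, or more directly compute $xy = (\psi^l)^{-1}(h) = q^l h + q^{l-1} + q^{l-2} + \cdots + q + 1 = q^l h + [l]_q$ (using $(\psi^l)^{-1}(h) = \psi^{-l}(h)$, which iterates $\psi^{-1}(h) = qh+1$). Then in the GWA $H(S[h], \psi^l, h)$ the generators satisfy $yx = h$ and $xy = q^l h + [l]_q$, so $xy - q^l(yx) = [l]_q$, exhibiting $H(S[h], \psi^l, h) \cong A_{q^l, [l]_q}(S)$ via $h \leftrightarrow yx$. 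Combining with the equivalence above yields $\mathcal{O}(A_{q,1}(S)) \simeq \mathcal{O}(A_{q^l, [l]_q}(S))$.

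The main obstacle I anticipate is bookkeeping with the various inverse automorphisms and sign/normalization conventions: the definition has $\psi^{-1}(h) = qh + v$, so care is needed to track whether the relevant relations involve $\psi$ or $\psi^{-1}$, and to match $yx = z$, $xy = \phi^{-1}(z)$ correctly against $xy - uyx = v$. The arithmetic with geometric series $q^{-1} + \cdots + q^{-i}$ and the reduction $q^{\pm l} \equiv 1 \bmod b$ is routine once set up, but getting the constant $[l]_q$ to appear on the nose (rather than, say, $q^{-l}[l]_q$ or a rescaled variant) requires choosing the right normalization of the central element in the twisted GWA — this is where I would be most careful, and possibly where an innocuous rescaling (harmless at the level of categories $\mathcal{O}$, as already used in the proof of Theorem \ref{main} via Lemma \ref{units-idempotents}) is needed to land exactly on $A_{q^l,[l]_q}(S)$.
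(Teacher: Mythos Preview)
Your approach is correct and matches the paper's: verify the hypotheses of Theorem~\ref{main} for $R=S[h]$, $z=h$, and the automorphism $\phi$ with $\phi^{-1}(h)=qh+1$ by reducing modulo $b$, then identify the $l$-th twist as $A_{q^l,[l]_q}(S)$. Two small remarks: the paper works with $\phi^{-i}(h)=q^ih+[i]_q$ rather than your $\psi^i$, so the relevant constants are $q$-integers from the outset and the bookkeeping you worry about in the last paragraph disappears (no rescaling is needed, your computation $xy=q^l(yx)+[l]_q$ already lands exactly on $A_{q^l,[l]_q}$); and your check that $\psi^l(h)-h\in bR$ is incomplete as written, since you only handled the $h$-coefficient---you also need the constant term $-(q^{-1}+\cdots+q^{-l})=-q^{-l}[l]_q$ to lie in $bS$, which follows because $[l]_q\equiv 1+u+\cdots+u^{l-1}=0\pmod b$.
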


\begin{proof}
We have that $\phi^{-i}(h)=q^ih+[i]_q.$  It follows that $(\phi^{-i}(h),
h)=1$ for $0<i<l$ and $\phi^{-l}(h)=h \mod b.$ Now the result follows
directly from Theorem \ref{main}.
\end{proof}

\begin{cor}
Let $\bold{k}$ be a commutative ring. Let $p$ be a prime
integer such that its image in $\bold{k}$ is nilpotent.
Let $v=\prod_i (h-\lambda_i), \lambda_i\in\bold{k}[h]$ be a polynomial, such
that for any distinct pair $\lambda_i, \lambda_j$ we have
$\lambda_i-\lambda_j-n\in\bold{k}^*$  for all $n\in\mathbb{Z}$  (for
example $v=h^n, n\geq 1$).
Then $\mathcal{O}(A(v))$  is equivalent to $\mathcal{O}(A(v,p)).$
\end{cor}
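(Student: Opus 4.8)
The plan is to reduce everything to Theorem~\ref{main} applied to the data $R=\bold{k}[h]$, $z=v$, $\phi$ the shift $\phi(f(h))=f(h+1)$, the integer $l=p$, and $b$ the image of the prime $p$ in $\bold{k}\subseteq\bold{k}[h]$. This $b$ is central (as $\bold{k}[h]$ is commutative) and nilpotent by hypothesis, so $bR=p\,\bold{k}[h]$. With this choice $H(R,\phi,z)=A(v)$ and $H(R,\phi^l,z)=A(v,p)$, so the corollary follows verbatim from Theorem~\ref{main} once its two hypotheses are checked in this situation; no new module-theoretic work is needed.

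\emph{The congruence $\phi^p(z)-z\in bR$.} This is elementary. For every exponent $k$ one has $(h+p)^k-h^k=\sum_{j\geq 1}\binom{k}{j}p^j h^{k-j}\in p\,\bold{k}[h]$, and summing over the monomials of $v$ gives $v(h+p)-v(h)\in p\,\bold{k}[h]$. No lift of Frobenius enters here, just the binomial theorem.

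\emph{The coprimality $(z,\phi^i(z))=Z(R)$ for $1\leq i\leq p-1$.} Writing $v=\prod_r(h-\lambda_r)$ we have $\phi^i(v)=v(h+i)=\prod_s\bigl(h-(\lambda_s-i)\bigr)$. The step I would isolate is the standard fact: if $f=\prod_r(h-a_r)$ and $g=\prod_s(h-b_s)$ in $\bold{k}[h]$ with every difference $a_r-b_s$ a unit of $\bold{k}$, then $(f,g)=\bold{k}[h]$. I would prove this by induction on $\deg f$: the case $f=h-a$ holds because the image of $g$ in $\bold{k}[h]/(h-a)\cong\bold{k}$ is $\prod_s(a-b_s)\in\bold{k}^*$, so $g$ is invertible modulo $f$; the inductive step factors $f=(h-a_m)f'$ and multiplies the B\'ezout identities coming from $(f',g)=\bold{k}[h]$ and $(h-a_m,g)=\bold{k}[h]$. (Alternatively, one may invoke that $\operatorname{Res}(f,g)=\pm\prod_{r,s}(a_r-b_s)$ lies in $(f,g)$ via Sylvester's identity.) Applying this with $a_r=\lambda_r$, $b_s=\lambda_s-i$, the relevant differences are $\lambda_r-\lambda_s+i$: if $\lambda_r\neq\lambda_s$ this is a unit by the hypothesis on $v$ (take $n=-i$), and if $\lambda_r=\lambda_s$ it is the integer $i$ with $1\leq i\leq p-1$, which is a unit because $\gcd(i,p)=1$ gives integers with $ai+bp=1$, whence $ai=1-bp$ is a unit ($bp$ being nilpotent) and so is $i$.

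The only genuinely non-formal ingredients are the coprimality lemma for products of linear factors over an arbitrary commutative ring and the remark that $1,\dots,p-1$ become units once $p$ is nilpotent in $\bold{k}$; I expect the former to be the main obstacle, though it is classical and the induction above settles it cleanly. With both hypotheses of Theorem~\ref{main} in hand, the equivalence $\mathcal{O}(A(v))\simeq\mathcal{O}(A(v,p))$ is immediate, and the parenthetical example $v=h^n$ is covered since then there is a single root $\lambda=0$ and hence no distinct pair to test.
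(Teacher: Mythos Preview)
Your proposal is correct and follows exactly the paper's approach: apply Theorem~\ref{main} with $R=\bold{k}[h]$, $\phi$ the shift by $1$, $z=v$, $l=p$, and $b=p$. The paper's own proof is a two-line sketch asserting that $v(h+i)$ and $v$ are coprime for $0<i<p$ and that $v(h+p)\equiv v(h)\pmod p$; you have simply spelled out these verifications in full (via the binomial theorem and the product-of-linear-factors coprimality lemma), which is entirely appropriate.
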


\begin{proof}
It follows immediately from our assumptions that $v(h+i)$  and $v$ are
coprime in $\bold{k}[h]$  for all $0<i<p,$ and $v(h+p)=v(h)
\mod p$. Now applying Theorem \ref{main} for $b=p,$ we are done.
\end{proof}

Our next result is an application of Theorem \ref{main} to the
representation theory of GWAs.

\begin{cor}
Assume in addition to the  assumptions of Theorem \ref{main} that
$R=Z(R)$  and $\bold{k}\subset R$ is an algebraically closed field, such
that $R$ is a finitely generated $\bold{k}$-algebra; and
moreover,
$$\phi^l=Id \mod b.$$ Then any simple module in category
$\mathcal{O}$  has dimension l over $\bold{k}$.
\end{cor}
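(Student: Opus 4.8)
The plan is to combine Theorem~\ref{main} with the Morita-theoretic structure uncovered in its proof. By Theorem~\ref{main}, the category $\mathcal{O}(H(R,\phi,z))$ is equivalent to $\mathcal{O}(H(R,\phi^l,z))$, and under this equivalence a simple object $M$ corresponds to a simple object $N\in\mathcal{O}(H(R,\phi^l,z))$ with $M\cong R_\phi[y]\otimes_{R_{\phi^l}[y']}N$ as $R_\phi[y]$-modules, so $\dim_{\bold k}M=l\cdot\dim_{\bold k}N$. Thus it suffices to show that every simple module in $\mathcal{O}(H(R,\phi^l,z))$ is one-dimensional over $\bold k$. The hypothesis $\phi^l=\mathrm{Id}\bmod b$ with $b$ nilpotent is exactly what makes this tractable: $H(R,\phi^l,z)$ is, modulo a nilpotent ideal, close to a commutative ring.

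First I would reduce to the case where $b$ acts as zero. If $N$ is simple and $b^t=0$, then $bN$ is a proper submodule (it is proper because $b$ is nilpotent and central, so $b^{t-1}\cdot(bN)=0\neq N$ unless $bN=0$; more carefully, the chain $N\supseteq bN\supseteq b^2N\supseteq\cdots$ must stabilize at $0$, and by simplicity each step is either all of $N$ or zero, forcing $bN=0$). Hence $N$ is a module over $H(R,\phi^l,z)/(b)=H(R/bR,\bar\phi^l,\bar z)$, and since $\bar\phi^l=\mathrm{Id}$ on $R/bR$, this is the \emph{classical} (untwisted) generalized Weyl algebra $H(\bar R,\mathrm{Id},\bar z)$ over the commutative finitely generated $\bold k$-algebra $\bar R=R/bR$, with relations $yx=\bar z$, $xy=\bar z$, and $x,y$ commuting with $\bar R$.

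Next I would analyze simple modules in $\mathcal{O}$ over a classical GWA $H(\bar R,\mathrm{Id},\bar z)$. Here $x$ acts locally nilpotently, and by the analogue of Lemma~\ref{category-O} (with $l=1$, $b=0$) the element $\bar z=yx$ acts locally nilpotently too; combined with $N$ being simple and $\bar R$ finitely generated over the algebraically closed field $\bold k$, I expect $N$ to be annihilated by a maximal ideal of $\bar R$ containing $\bar z$, hence to be a module over $\bold k[x,y]/(xy,yx)=\bold k[x,y]/(xy)$ with $x$ locally nilpotent; the flat-sections functor $N\mapsto N^x=\ker x$ then identifies simple such modules with simple $\bold k$-modules, i.e.\ $N\cong\bold k$ with $x=y=0$. (This is essentially the Lunts--Rosenberg equivalence $\mathcal{O}(H(\bar R,\mathrm{Id},\bar z))\simeq (\bar R/\bar z)\text{-mod}$ applied to simples, noting that simple $\bar R/\bar z$-modules are one-dimensional since $\bar R/\bar z$ is a finitely generated algebra over the algebraically closed field $\bold k$, by the Nullstellensatz.) Therefore $\dim_{\bold k}N=1$ and $\dim_{\bold k}M=l$.

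The main obstacle is the passage from "$N$ is simple in $\mathcal{O}$" to "$N$ is killed by a maximal ideal of $\bar R$ and is one-dimensional": one must check that the hypotheses $(z,\phi^i(z))=Z(R)$ for $0<i<l$ survive and are genuinely needed so that the Lunts--Rosenberg-type equivalence applies on the $\phi^l$-side (where the relevant coprimality condition becomes vacuous since $l$ is replaced by $1$), and one must ensure that local nilpotence of $x$ plus simplicity really forces the $\bar R$-action to factor through a point—this uses that $\bar z$ acts locally nilpotently together with a Nakayama/Nullstellensatz argument over the finitely generated $\bold k$-algebra $\bar R$. I do not expect serious difficulties beyond carefully invoking the already-established equivalences, but the bookkeeping of which coprimality hypotheses are in force at each stage is where care is required.
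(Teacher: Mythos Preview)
Your proposal is correct and follows essentially the same approach as the paper: reduce modulo the nilpotent $b$ using simplicity, apply Theorem~\ref{main} to extract the factor of $l$, and invoke the Nullstellensatz on the resulting algebra with $\phi^l=\mathrm{Id}$. The paper reverses your order (it kills $b$ on $M$ first, then applies Theorem~\ref{main} to $H(\bar R,\bar\phi,\bar z)$ with $b=0$) and is more direct in the last step, simply observing that $H(\bar R,\mathrm{Id},\bar z)$ is itself a \emph{commutative} finitely generated $\bold k$-algebra (since $xy=yx=\bar z$ and $x,y$ centralize $\bar R$), so its simple modules are one-dimensional by the Nullstellensatz---your detour through Lunts--Rosenberg and flat sections is unnecessary.
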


\begin{proof}
Let $M$ be a simple module in the category $\mathcal{O}$.
Since $b$ is a nilpotent central element of $H,$ it follows
that $bM=0.$  So, $M$ is a simple $\bar{H}=H(\bar{R}, \bar{\phi},
\bar{z})$-module where $\bar{R}=R/bR$ and $\bar{z}=z\mod b,
\bar{\phi}=\phi\mod b.$ In particular, $\bar{\phi}^l=Id.$   Applying
Theorem \ref{main} to $\bar{H}$ , we see that $M=N^l$ (as $\bold{k}$
-vector spaces) where $N$ is a simple $H(\bar{R},id, \bar{z})$-module.
Now, $H(\bar{R}, id, \bar{z})$ is a commutative  finitely generated
$\bold{k}$-algebra, hence $N$ is 1-dimensional by the Hilbert
Nullstellensatz, and we are done.
\end{proof}

\end{document}